\newtheorem{lemma}{Lemma}
\newtheorem{theorem}[lemma]{Theorem}
\newtheorem{exampl}[lemma]{Example}
\newcommand{\R}{{\bf R}}
\newcommand{\Z}{{\bf Z}}
\newcommand{\alp}{\alpha}
\newcommand{\bet}{\beta}
\newcommand{\gam}{\gamma}
\newcommand{\del}{\delta}
\newcommand{\pr}{\prime}
\newcommand{\eqref}[1]{(\ref{#1})}
\newenvironment{proof}{\textbf{Proof}}{\hfill$\Box$}
\newcommand{\txtfrac}[2]{\mbox{$\frac{#1}{#2}$}}
\title{Archimedes' calculations of square roots}
\author{E. B. Davies}
\date{3 January 2011}
\begin{document}
\maketitle
\begin{abstract}
We reconsider Archimedes evaluations of several square roots in `Measurement of a Circle'. We show that several methods proposed over the last century or so for his evaluations fail one or more criteria of plausibility. We also provide internal evidence that he probably used an interpolation technique. The conclusions are relevant to the precise calculations by which he obtained upper and lower bounds on $\pi$.
\end{abstract}

Keywords: Archimedes, square root, pi\\
2010MSC classification: 01A20

\section{Introduction}
We do not know how Archimedes proved his famous result
\[
\frac{265}{153}<\sqrt 3<\frac{1351}{780},
\]
mentioned in `Measurement of a Circle', but a number of possible methods have been suggested. They may be found on the web and elsewhere; see \cite{a,d,heath1,heath2,sond}. A good candidate should satisfy the following criteria.
\begin{itemize}
\item It should only use methods that were known at the time of Archimedes;
\item It should be possible to replace $3$ by any other positive integer;
\item It should be as short and elementary as possible;
\item It should not involve extraordinary ingenuity or tricks.
\end{itemize}
Some of the proposals discussed by Heath in \cite[pp. 51, 52]{heath2} fail the first test, unless some much later authors were describing methods that had been known centuries before their own time.

Section~4 describes a new `interpolation' method that meets all four criteria, and that, if correct, would further confirm Archimedes' understanding of limits. In Section~6 we prove that Archimedes' lower bound cannot be obtained by Hero's method. Section~\ref{pi-evaluation} discusses a number of much harder square roots that Archimedes calculates in the course of obtaining his even more famous upper and lower bounds
\[
3\txtfrac{10}{71} <\pi <3\txtfrac{1}{7}.
\]
Several quite difficult new issues emerge, but it is again likely that Archimedes used an interpolation method to calculate the square roots.

Before starting we mention three standard problems in this type of enterprise. Euclid's Elements survives in a variety of substantially different versions, all of which are copies made hundreds of years after he died. Although he must have written it around $300$ BC, little is known about him personally. For this reason the word Euclid should be regarded as a label rather than a name. See \cite[Chap.~2, 4]{cuomo} and \cite[pp. 354-361]{heath1} for more detailed accounts of this issue. Two comprehensive sources of the Elements are \cite{heath-euclid} and \cite{fitzpatrick}, but a valuable summary may also be found at \cite{joyce}.

Archimedes of Syracuse (c.\ 287-212 BC) is a less shadowy figure than Euclid, but our knowledge of his mathematical work is similarly indirect. His `Measurement of a Circle' survives via numerous copies and translations of a ninth century document written in Constantinople, \cite[pp.25-27]{heath2}. It is also a part of the famous Archimedes palimpsest, transcribed in the tenth century, overwritten in the thirteenth century and discovered in 1906 by Heiberg. The palimpsest is now being studied by means of sophisticated imaging techniques at the Walters Art Museum in Baltimore, \cite{ArchPalProj}.

The second problem is that with the knowledge of later developments, one can easily read more into an ancient text than is there. In particular, it is tempting to assume that an author was aware of what now seems to be an obvious development of an idea in his surviving work. It has, for example, been argued that Euclid and his immediate successors did not understand the fundamental theorem of arithmetic in the way that we now do, \cite{taisbak}. One must constantly be aware of the dangers of falling into such a trap.

The third problem is finding a satisfactory compromise between notation and concepts that Archimedes would have understood and our very different way of expressing the problems of interest. This article uses modern terminology for the calculations, including the free use of negative numbers, but only in circumstances in which there seems to be no problem with rewriting the arguments appropriately.

Throughout the paper Roman letters are used to denote natural numbers and Greek letters to denote fractions or other real numbers. All numerical values are rounded down so that the digits presented are digits of the exact value.

\section{Brute force}
The brute force method involves calculating $m^2$ and $3n^2$ for all $m, \, n$ up to $1351$, and then finding $m,\, n$ such that $m^2-3n^2$ is very small. If one calculates one square per minute, producing such a table would take twenty-four hours, and the table would then be available for other uses. The squares need not be calculated ab initio because of the identity $(n+1)^2=n^2+2n+1$; this has the independent advantage that one can detect occasional errors by examining the result for every $n$ that is a multiple of $10$. Napier and others completed much more arduous tasks before computers were invented. The fact that the method fails the third of our tests is not decisive, in spite of the complexities of Greek arithmetic when dealing with numbers much bigger than a thousand. Archimedes might have used this method, but it would have been much more laborious than the interpolation method that we describe in Section~\ref{IM}.

Table~\ref{tone} shows a fragment of a systematic calculation of the squares of the first thousand natural numbers.
\begin{table}[!h]
\[
\begin{array}{rr}
120^2=14400&\hspace{2em} 3\times 120^2=43200\\
+241&+723\\
121^2=14641&3\times 121^2=43923\\
+243&+729\\
122^2=14884&3\times 122^2=44652\\
+245&+735\\
123^2=15129&3\times 123^2=45387\\
+247&+741\\
124^2=15376&3\times 124^2=46128
\end{array}
\]
\caption{Calculations of squares\label{tone}}
\end{table}

Table~\ref{ttwo} presents a selection of the results of the calculations indicated above. In the second column the upper bounds to $\sqrt{3}$ are displaced slightly to the right.
\begin{table}[!h]
\[
\begin{array}{rl}
5^2- 3\times 3^2=-2,\hspace{2em}&5/3=1.6,\\
7^2-3\times  4^2=1,\hspace{2em}&\hspace{1.5em}7/4=1.75,\\
19^2- 3\times  11^2= -2,\hspace{2em}&19/11\sim 1.727272,\\
26^2- 3\times  15^2=1,\hspace{2em}&\hspace{1.5em}26/15\sim 1.733333,\\
71^2- 3\times  41^2= -2,\hspace{2em}&71/41\sim 1.731707,\\
97^2- 3\times  56^2=1,\hspace{2em}&\hspace{1.5em}97/56\sim 1.732142,\\
265^2- 3\times  153^2= -2,\hspace{2em}&265/153\sim 1.732026,\\
362^2-3\times  209^2=1,\hspace{2em}& \hspace{1.5em}362/209\sim 1.732057,\\
989^2- 3\times  571^2= -2,\hspace{2em}& 989/571\sim 1.732049,\\
1351^2- 3\times  780^2=1,\hspace{2em}&\hspace{1.5em}1351/780\sim 1.732051.
\end{array}
\]
\caption{Results of brute force calculations\label{ttwo}}
\end{table}

\section{Pell's equation}\label{PE}

It is often suggested that Archimedes might have solved the equations $a^2-3b^2=m$ for $m=1$ and $m=-2$ systematically; it should be mentioned here that $a^2-3b^2=-1$ has no integer solutions. We argue that this is not plausible on historical grounds.

The equation $a^2-cb^2=m$ is often called Pell's equation, particularly when $m=1$, because of an unfortunate error; it has nothing to do with Pell. It is obviously equivalent to $(a/b)^2-c=m/b^2$, so any integer solution with $b$ large and $m$ small yields an accurate rational approximation to $\sqrt{c}$.
This method is indeed one of the best ways of approximating $\sqrt{c}$, and the main problem is finding evidence that Archimedes would have been able to solve the equation when $c=3$. The situation is as follows.

It is known from cuneiform clay tablets that the Babylonians were able to calculate square roots accurately long before the time of Archimedes, but there is no contemporary evidence of how they did this. In the fifth century AD Proclus claimed that the Pythagoreans calculated the square root of $2$ by using the following fact, which I have rewritten in modern notation. If $a_n,\, b_n$ satisfy $a_n^2-2b_n^2=\pm 1$ and one puts $a_{n+1}=a_n+2b_n$, $b_{n+1}=a_n+b_n$ then $a_{n+1}^2-2b_{n+1}^2=\mp 1$. Starting from $a_1=b_1=1$ this generates a sequence of steadily improving upper and lower bounds alternately. This is, in fact, a systematic procedure for solving Pell's equation when $c=2$ and $m=\pm 1$.

Proclus's claim may have been based on guesswork or on documentary sources that are now lost. One has to be careful not to accept it uncritically, because he would have known the much later work on such equations by Diophantus of Alexandria, written in the third century AD. However he quotes the identity
\[
(a+2b)^2-2(a+b)^2=-(a^2-2b^2)
\]
from the Elements, Book~2, Prop.~10. This makes a reasonable case for believing Proclus, but it does not imply that Archimedes was able to generalize the procedure to $c=3$. Indeed there is no evidence to support this. Solving such equations might now seem elementary, but the evidence is that it took many hundreds of years before the relevant theory became routine knowledge; see \cite[Chap.~5B]{VDW}. Some of the identities needed look very artificial if one does not express them in term of quadratic arithmetic in the ring $\Z(\sqrt{c})$, within which the equation can be rewritten in the form $(a+b\sqrt{c})(a-b\sqrt{c})=m$.

\section{An interpolation method}\label{IM}
We describe the method in modern notation for the convenience of the reader, and explain its relationship with the use of continued fractions in the next section. The method differs from others in that it starts with two approximations rather than one, and passes by an elementary calculation to a new pair of approximations.

The only result that we use is the fact that
\begin{equation}
\frac{v}{u}<\frac{y}{x}\hspace{2em}\mbox{ implies }%
\hspace{2em}\frac{v}{u}<\frac{v+y}{u+x}<\frac{y}{x}.\label{two}
\end{equation}
We have not found this in the Elements, but it can be deduced from Book~5, Prop.~12, whose proof is logical rather than geometric. It may also be proved by modifying the proof of Prop.~12 appropriately.

One can obtain an insight into (\ref{two}) by considering the slope of the diagonal of a parallelogram that has two adjacent edges with slopes $v/u$ and $y/x$ as in Figure~\ref{interpolfigure}. It is not obvious whether one can convert this use of coordinate geometry into a form that Euclid or Archimedes could have understood.
\begin{figure}[!h]
\begin{center}
\scalebox{0.9}{
\unitlength=1ex
\begin{picture}(40,35)
\put(0,0){\vector(0,1){35}}
\put(0,0){\vector(1,0){40}}
\put(0,0){\line(4,1){20}}
\put(0,0){\line(1,5){5}}
\put(0,0){\line(5,6){25}}
\put(5,25){\line(4,1){20}}
\put(20,5){\line(1,5){5}}
\put(23,5){\makebox(0,0){$(u,v)$}}
\put(5,27){\makebox(0,0){$(x,y)$}}
\put(32,30){\makebox(0,0){$(u+x,v+y)$}}
\end{picture}
} 
\end{center}\vspace{0ex}
\caption{Interpolation using slopes\label{interpolfigure}}
\end{figure}
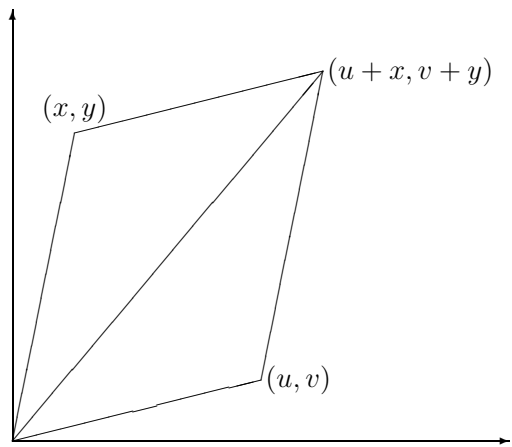

An alternative proof of (\ref{two}) involves rewriting the hypothesis in the form $vx<yu$ and interpreting this as comparing the areas of two rectangles. The first inequality in the conclusion is rewritten in the form $v(u+x)<(v+y)u$ and is also interpreted in terms of areas. The result is then obvious by inspecting Figure~\ref{interpfig2}. The second inequality has a similar proof. This type of transformation is to be found in the Elements, Book~6, Prop.~14, and (\ref{two}) can be proved from Prop.~14.

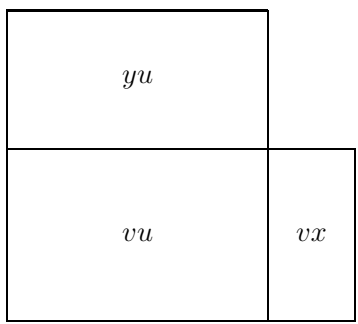
\begin{figure}[!h]
\begin{center}
\scalebox{0.9}{
\unitlength=0.7ex
\begin{picture}(40,36)
\put(0,0){\line(0,1){36}}
\put(0,0){\line(1,0){40}}
\put(0,20){\line(1,0){40}}
\put(0,36){\line(1,0){30}}
\put(30,0){\line(0,1){36}}
\put(40,0){\line(0,1){20}}
\put(15,10){\makebox(0,0){$vu$}}
\put(15,28){\makebox(0,0){$yu$}}
\put(35,10){\makebox(0,0){$vx$}}
\end{picture}
} 
\end{center}\vspace{0ex}
\caption{Interpolation using areas\label{interpfig2}}
\end{figure}

Given two fractions $\alp=v/u$ and $\bet=y/x$ such that $\alp<\sqrt{3}<\bet$, one puts
\[
\gam=\frac{v+y}{u+x}.
\]
One then replaces $\alp$ by $\gam$ if $\gam<\sqrt{3}$, or equivalently if $(v+y)^2<3(u+x)^2$, and replaces $\bet$ by $\gam$ if $\gam >\sqrt{3}$, or equivalently if $(v+y)^2>3(u+x)^2$, to obtain a new and better enclosure of $\sqrt{3}$.  This is repeated until the desired accuracy is achieved.

The basic idea of this method, obtaining steadily better upper and lower bounds on a number by a systematic algorithm, was used by Archimedes on many occasions. See, for example, Section~\ref{pi-evaluation}.

A list of the first sixteen iterations of the interpolation method starting from the pair $1,\, 2$ is shown in Table~\ref{tthree}.
\begin{table}[!h]
\[
\begin{array}{ccccc}
\frac{1}{1}<\sqrt{3}<\frac{2}{1}&\hspace{1em}&%
\frac{3}{2}<\sqrt{3}<\frac{2}{1}&\hspace{1em}&%
\frac{5}{3}<\sqrt{3}<\frac{2}{1}\vspace{1ex}\\
\frac{5}{3}<\sqrt{3}<\frac{7}{4}&\hspace{1em}&%
\frac{12}{7}<\sqrt{3}<\frac{7}{4}&\hspace{1em}&%
\frac{19}{11}<\sqrt{3}<\frac{7}{4}\vspace{1ex}\\
\frac{19}{11}<\sqrt{3}<\frac{26}{15}&\hspace{1em}&%
\frac{45}{26}<\sqrt{3}<\frac{26}{15}&\hspace{1em}&%
\frac{71}{41}<\sqrt{3}<\frac{26}{15}\vspace{1ex}\\
\frac{71}{41}<\sqrt{3}<\frac{97}{56}&\hspace{1em}&%
\frac{168}{97}<\sqrt{3}<\frac{97}{56}&\hspace{1em}&%
\frac{265}{153}<\sqrt{3}<\frac{97}{56}\vspace{1ex}\\
\frac{265}{153}<\sqrt{3}<\frac{362}{209}&\hspace{1em}&%
\frac{627}{362}<\sqrt{3}<\frac{362}{209}&\hspace{1em}&%
\frac{989}{571}<\sqrt{3}<\frac{362}{209}\vspace{1ex}\\
\frac{989}{571}<\sqrt{3}<\frac{1351}{780}.&&
\end{array}
\]
\caption{The interpolation method\label{tthree}}
\end{table}
It may be seen that Archimedes' upper and lower bounds are obtained by this method. However, his lower bound is by no means the best in the list. There is not enough information to explain this, but the following is plausible. He might initially have stopped calculating at
\[
\frac{265}{153}<\sqrt{3}<\frac{362}{209}.
\]
If so, he would later have discovered that the upper bound on $\sqrt{3}$ was not strong enough to obtain the lower bound that he desired on $\pi$ and then proceeded further with the iteration above, but only recorded the improved \emph{upper} bound to $\sqrt{3}$, because he did not want to repeat more calculations than necessary.

The interpolation method is systematic and completely elementary, apart from depending on being able to determine whether a fraction $a/b$ is greater than or less than $\sqrt{3}$. This is equivalent to whether $a^2-3b^2$ is positive or negative. This can be decided by evaluating the squares involved, which is quite laborious when $a$ and $b$ are large. However, mathematicians of all eras up to the invention of pocket calculators were very skilled when carrying out arithmetic calculations.

Dominus has described a method that yields the same upper and lower bounds as those obtained by the interpolation method, \cite{d}. It depends on producing a list of good approximations to $\sqrt{3}$ with small numerators and denominators and then finding a pattern that can be extrapolated. It is finally necessary to check that the extrapolated values are indeed good approximations. The main problem with this method is that if one applies it to more complex problems, such as the determination of the square roots of larger numbers, no obvious pattern is likely to emerge. On the other hand the interpolation method always works because of its obvious similarity to the interval bisection proof of the intermediate value theorem.

\section{Continued fractions}\label{CF}

The fraction $a/b$ can only be close to $\sqrt{3}$ if
$m=a^2-3b^2$ is small relative to $b^2$. This proves that it should
be possible to find a connection between any method of approximating
$\sqrt{3}$ by fractions and solutions of Pell's equation or some quadratic Diophantine generalization. Pell's equation is
known to be connected in turn to the theory of continued fractions, \cite{williams}. In this section we show that the method of continued fractions is a special case of the interpolation method. This by no means implies that the theory of continued fractions underlies the calculations of the last section, which are elementary and rigorous in their own right. There is no evidence that the use of continued fraction expansions was known in classical Greece, in spite of the fact that determining the highest common factor of two numbers by using the Euclidean algorithm can be interpreted that way.

Instead of providing a complete analysis, we consider the particular case in which some positive real number $x$ is approximated by
\[
x\sim a_1+\frac{1}{a_2+\frac{1}{a_3+\frac{1}{a_4}}}
\]
where $a_r$ are all positive integers. Some elementary algebra yields
\begin{equation}
a_1+\frac{1}{a_2+\frac{1}{a_3+\frac{1}{a_4}}}=\frac{\alp+\bet a_4}{\gam+\del a_4}\label{cf1}
\end{equation}
where $\alp,\, \bet,\, \gam,\, \del$ are certain polynomials in $a_1,\, a_2,\, a_3$. Moreover
\begin{equation}
\frac{\alp}{\gam}=a_1+\frac{1}{a_2}\label{cf2}
\end{equation}
and
\begin{equation}
\frac{\bet}{\del}=a_1+\frac{1}{a_2+\frac{1}{a_3}}.\label{cf3}
\end{equation}
The validity of (\ref{cf1}) follows by considering the map
\[
a_4\to a_1+\frac{1}{a_2+\frac{1}{a_3+\frac{1}{a_4}}}
\]
as the composition of a sequence of fractional linear transformations on $\R\cup\{ \infty\}$. The equations (\ref{cf2}) and (\ref{cf3}) can then be obtained by putting $a_4=0,\, \infty$ respectively. These formulae show that if the positive integers $a_1,\, a_2,\, a_3$ have already been chosen by the continued fraction method to approximate $x$ as well as possible, then the choice of the next positive integer $a_4$ is of exactly the type discussed in Section~\ref{IM}.

\section{Hero's method}\label{HM}

In the first century AD Hero of Alexandria described a method which converges more rapidly than the interpolation method. Perhaps over-generously, we will use the term `Hero's method' to refer to any algorithm for calculating square roots that relies upon the fact that the arithmetic mean of two numbers is greater than their geometric mean, which in turn is larger than their harmonic mean. The AM-GM inequality is mathematically equivalent to the GM-HM inequality and is a special case of the Elements, Book~5, Prop.~25. It may be proved by observing that one can put four disjoint rectangles of size $a\times b$ inside a square with edge length $a+b$, as in Figure~\ref{amgmfigure}. These means were known to the Pythagoreans and were mentioned in Plato's Timaeus 35, 36.
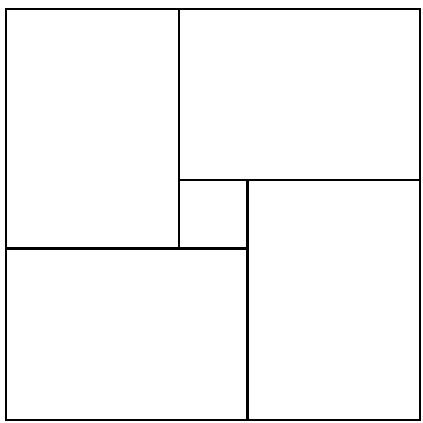
\begin{figure}[!h]
\begin{center}
\unitlength=0.5ex
\begin{picture}(100,80)(0,20)
\put(20,20){\line(0,1){60}}
\put(80,20){\line(0,1){60}}
\put(20,80){\line(1,0){60}}
\put(20,20){\line(1,0){60}}
\put(55,20){\line(0,1){35}}
\put(80,55){\line(-1,0){35}}
\put(45,80){\line(0,-1){35}}
\put(20,45){\line(1,0){35}}
\end{picture}
\end{center}
\caption{The AM-GM inequality\label{amgmfigure}}
\end{figure}

Hero's original method may be found in his Metrica, which was lost for many centuries, but eventually rediscovered in 1896 in Constantinople by R Sch\"one. An account of Hero's mathematical work may be found in \cite[Chap. 18]{heath2} and other important contributions of his are described in \cite{Pap}. Applying Hero's method to the numbers $a$ and $b=3/a$, whose geometric mean is $\sqrt{3}$, one obtains
\[
a^\pr=\frac{a+3/a}{2}>\sqrt{3}.
\]
Starting from $a=5/3$ this can be applied twice to yield
\[
\frac{26}{15}>\sqrt{3}\mbox{  and then  } \frac{1351}{780}>\sqrt{3}.
\]
In spite of this, it is not possible to use arithmetic and harmonic means to obtain Archimedes' lower bound $\frac{265}{153}$, because of the following theorem; note that $3$ is not a factor of $265$. This proves that Archimedes did not use Hero's method to obtain his lower bound.

\begin{theorem}
Let $x,\, y$ be two distinct positive rational numbers with $xy=3$ and let $a,\, h$ be their arithmetic and harmonic means respectively. Then $ah=3$ and $h<\sqrt{3}<a$.  Moreover $3$ must be a factor of the numerator of $h$.
\end{theorem}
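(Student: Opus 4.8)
The plan is to reduce the statement to explicit formulas and then settle the divisibility by a short case analysis on the prime $3$. First I would record the two means. Since $xy=3$,
\[
a=\frac{x+y}{2},\qquad h=\frac{2xy}{x+y}=\frac{6}{x+y},
\]
so that $ah=3$ is immediate. The strict inequalities $h<\sqrt{3}<a$ are exactly the AM-GM and GM-HM inequalities applied to the distinct numbers $x,y$, whose geometric mean is $\sqrt{xy}=\sqrt{3}$; these are the inequalities established via Figure~\ref{amgmfigure}, and strictness comes from $x\neq y$.

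For the divisibility claim I would pass to a coprime representation. Writing $x=p/q$ in lowest terms with $p,q$ positive integers, the constraint $xy=3$ forces $y=3q/p$, and substituting into $h=6/(x+y)$ gives
\[
h=\frac{6pq}{p^2+3q^2}.
\]
The reduced numerator of $h$ is $6pq/\gcd(6pq,\,p^2+3q^2)$. Since $3$ always divides $6pq$, the claim is equivalent to showing that $\gcd(6pq,\,p^2+3q^2)$ cannot absorb \emph{every} factor of $3$ occurring in $6pq$, and I would prove this by comparing the power of $3$ dividing numerator and denominator.

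The easy case is $3\nmid p$: then $p^2+3q^2$ differs from $p^2$ by the multiple $3q^2$ of $3$, so $3\nmid(p^2+3q^2)$; the gcd then carries no factor of $3$, and the factor of $3$ in $6pq$ survives into the reduced numerator. The case I expect to be the only real obstacle is $3\mid p$. Here coprimality forces $3\nmid q$, so $9$ divides $p^2$ while $p^2+3q^2$ equals a multiple of $9$ plus $3q^2$, a quantity divisible by $3$ but not by $9$; hence $3\mid(p^2+3q^2)$ but $9\nmid(p^2+3q^2)$, so $p^2+3q^2$ is divisible by exactly one factor of $3$. On the other hand $3\mid p$ makes $6pq$ divisible by at least $3^2$, so the gcd removes at most one factor of $3$ and again $3$ divides the reduced numerator. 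Combining the two cases proves the assertion, and in particular shows that a fraction such as $265/153$, whose numerator is prime to $3$, can never occur as the harmonic mean $h$.
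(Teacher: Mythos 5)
Your proposal is correct and follows essentially the same route as the paper: the same explicit formulas for $a$ and $h$, the appeal to the AM--GM--HM inequalities for the middle claim, and the identical two-case analysis (on whether $3$ divides the numerator of $x$ in lowest terms) comparing the power of $3$ in $6pq$ with that in $p^2+3q^2$. The only difference is notational ($p/q$ versus the paper's $n/m$) and your slightly more explicit phrasing in terms of the gcd.
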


\begin{proof}
The formulae
\[
a=\frac{x+3/x}{2}, \hspace{2em}h=\frac{2}{1/x+x/3}
\]
yield $ah=3$. The second statement of the theorem follows from the fact that the arithmetic mean of two numbers is greater than their geometric mean, which is $3$ in this case, while their harmonic mean is smaller.

In order to prove the final statement we put $x=n/m$ where $m,\, n$ are relatively prime, and consider the resulting expression
\[
h=\frac{6mn}{3m^2+n^2},
\]
in which the numerator and denominator may have some common factors. One needs to consider the effect of removing these. If $3$ is not a factor of $n$ then it is not a factor of the denominator, so the $3$ is the numerator cannot cancel out. If $3$ is a factor of $n$ then it cannot be a factor of $m$, so $3$ is a factor of the denominator but $3^2$ is not. On the other hand $3^2$ is a factor of the numerator, so after reducing $h$ to its lowest terms, the numerator must retain a factor $3$.
\end{proof}

In his account of Hero's work, Heath describes a complicated mixed strategy for obtaining Archimedes' lower bound, %
\cite[pp. 324, 325]{heath2}. He first applies Hero's method starting from $a=1$ to obtain
\begin{equation}
\frac{97}{56}<\sqrt{3}<\frac{168}{97}\label{th}
\end{equation}
in three steps. Further iterations of Hero's method from this starting point do not yield Archimedes' upper or lower bounds. Then he observes that if one applies the interpolation method to the two fractions in (\ref{th}) one obtains
\[
\frac{97+168}{56+97}=\frac{265}{153}
\]
and leaves the reader to infer that this is a lower bound. Finally he re-applies Hero's method starting from $a=5/3$ to obtain Archimedes' upper bound. Although he does eventually obtain the result required, he does so by using Hero's method with two different starting points and provides no reason why one should then apply the interpolation method to the particular pair of fractions in (\ref{th}) rather than to some other pair. It therefore fails to satisfy our third and fourth criteria for a satisfactory solution to the problem.

\section{Upper and lower bounds on $\pi$} \label{pi-evaluation}

Archimedes' upper and lower bounds
\[
3\txtfrac{10}{71} <\pi <3\txtfrac{1}{7}
\]
on $\pi$ have been discussed in some detail by Heath, \cite[pp.~50-56]{heath2}. We follow his account of Archimedes' text, but draw significantly different conclusions about the often unstated numerical methods that Archimedes used at various stages in the calculation.

Archimedes' upper and lower bounds were obtained by approximating the edge lengths of regular polygons that have $6,\, 12,\, 24,\, 48,$ and finally $96$ edges. There are three reasons for his residual error: he started with upper and lower bounds to $\sqrt{3}$ rather than the exact value, at each stage he obtained upper or lower bounds on certain further square roots, and he stopped at the $96$-gon stage. Numerically his final result is
\[
 3.140845 < \pi < 3.142857\, .
\]
With current computational facilities, one can assess the effect of the inexact values of Archimedes' square roots as follows. The exact bounds obtained by calculating the total perimeter of the $96$-gon are
\[
96\sin(\pi/96)<\pi<96\tan(\pi/96),
\]
which translate into the numerical bounds
\[
3.141031 <\pi<3.142714\, .
\]
By Taylor's theorem, each further doubling of the number of sides reduces both errors by a factor of $4$. At each stage the error in the upper bound is about twice as big as the error in the lower bound -- until the rounding errors become important.

Archimedes relied on the following theorem to pass from one polygon to another with twice the number of sides.

\begin{theorem}[Elements, Book~1, Props.~5 and 32]\label{pythaginduct}
Let $\theta_n$ be the internal angle of the right-angled triangle with edge lengths $a_n$, $c_n$ and $b_n=\sqrt{a_n^2+c_n^2}$, as shown in Figure~\ref{halving}. Then the triangle with edge lengths \[
c_{n+1}=c_n,\hspace{2ex} a_{n+1}=a_n+b_n,\hspace{2ex} b_{n+1}=\sqrt{a_{n+1}^2+c_{n+1}^2}
\]
has internal angle $\theta_{n+1}=\theta_n/2$.
\end{theorem}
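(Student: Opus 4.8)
The plan is to give the geometric proof that the two cited propositions are evidently designed to support, rather than a trigonometric one, since that is presumably the route Archimedes would have followed. I would label the triangle of Figure~\ref{halving} as $ABC$, with the right angle at $C$, so that $AC=a_n$, $CB=c_n$ and $AB=b_n$, and with the internal angle $\theta_n$ sitting at the vertex $A$ (thus $\tan\theta_n=c_n/a_n$). The single construction needed is to lay the hypotenuse off along the extended leg: extend the side $CA$ beyond $A$ to a point $D$ with $AD=AB=b_n$, so that $C$, $A$, $D$ are collinear in that order and $CD=CA+AD=a_n+b_n$.

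First I would check that the triangle $DCB$ is exactly the triangle described in the statement. It has a right angle at $C$, because $CB$ is perpendicular to the line $CAD$; its legs are $CD=a_n+b_n=a_{n+1}$ and $CB=c_n=c_{n+1}$; and its hypotenuse is $DB=\sqrt{a_{n+1}^2+c_{n+1}^2}=b_{n+1}$ by Pythagoras. Write $\theta_{n+1}$ for its internal angle at $D$.

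The heart of the argument is then two lines. The triangle $ABD$ is isosceles, since $AD=AB=b_n$, so by Book~1, Prop.~5 its base angles at $D$ and $B$ are equal. Because $C$, $A$, $D$ are collinear, the angle $BAC=\theta_n$ is precisely the exterior angle of triangle $ABD$ at the vertex $A$; by Book~1, Prop.~32 it equals the sum of the two remote interior angles, namely the equal angles at $D$ and $B$. Hence $\theta_n=2\,\angle ADB$. Since $A$ lies on the segment $DC$, the ray $DA$ coincides with the ray $DC$, so $\angle ADB=\angle BDC=\theta_{n+1}$, and therefore $\theta_{n+1}=\theta_n/2$, as required.

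There is no deep obstacle here: the whole difficulty is compressed into choosing the auxiliary point $D$ and recognising the isosceles triangle, after which Props.~5 and~32 do all the work. The two points I would be careful to verify are that the ordering of $C$, $A$, $D$ really does make $\angle BAC$ the exterior angle, so that Prop.~32 applies in the intended form, and that $\theta_n$ and $\theta_{n+1}$ are both acute, so that the conclusion is a genuine equality of angles and not merely an equality modulo supplements. As an independent check I would confirm the result analytically: $\tan\theta_{n+1}=c_n/(a_n+b_n)$, which is exactly the tangent half-angle expression $\sin\theta_n/(1+\cos\theta_n)$ after substituting $\sin\theta_n=c_n/b_n$ and $\cos\theta_n=a_n/b_n$.
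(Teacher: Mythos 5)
Your proof is correct and is exactly the argument that the paper's citation and Figure~\ref{halving} intend: the paper states the theorem without writing out a proof, relying on the reader to see that laying off the hypotenuse along the extended leg produces an isosceles triangle whose exterior angle at the old vertex is $\theta_n$, whence Props.~5 and~32 give $\theta_n=2\theta_{n+1}$. Your construction of the point $D$, the identification of the new right triangle, and the tangent half-angle cross-check are all sound, so there is nothing to add.
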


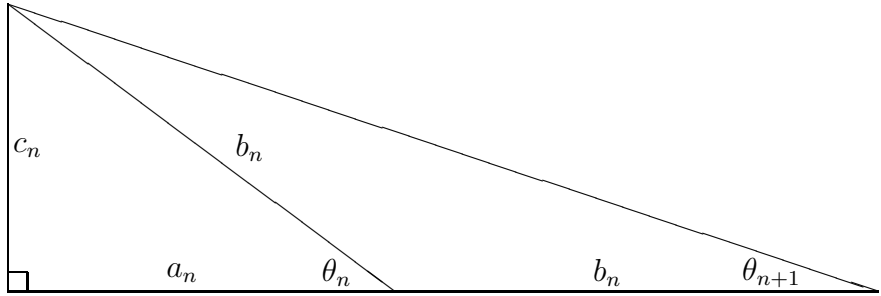
\begin{figure}[!h]
\begin{center}
\unitlength=0.7ex
\begin{picture}(90,30)
\put(0,0){\line(1,0){90}}
\put(0,0){\line(0,1){30}}
\put(0,30){\line(4,-3){40}}
\put(0,30){\line(3,-1){90}}
\put(2,0){\line(0,1){2}}
\put(0,2){\line(1,0){2}}
\put(18,2){\makebox(0,0){$a_n$}}
\put(2,15){\makebox(0,0){$c_n$}}
\put(62,2){\makebox(0,0){$b_n$}}
\put(79,2){\makebox(0,0){$\theta_{n+1}$}}
\put(25,15){\makebox(0,0){$b_n$}}
\put(34,2){\makebox(0,0){$\theta_{n}$}}
\end{picture}
\end{center}
\caption{Halving the internal angle\label{halving}}
\end{figure}

Applying Theorem~\ref{pythaginduct} repeatedly, Archimedes had to evaluate seven square roots, in addition to $\sqrt{3}$, but did not explain how he did this, \cite[pp.~55, 56]{heath2}.  Five of the seven square roots are approximated, above or below as necessary, by an integer plus a fraction whose denominator is $4$ or $8$. Any explanation of how he performed the calculations must take this fact into account.  It is probably relevant that dyadic fractions had already been used in measurement by the ancient Egyptians. One of his bounds was
\[
\gam=\sqrt{571^2+153^2}>591\txtfrac{1}{8}.
\]
A numerical computation of the LHS yields $\gam\sim 591.14296$, which confirms his result and indicates its level of accuracy.

It is likely that Archimedes proceeded as follows. First he found the integer part $n$ of $\gam$. This could have been done by using Pythagoras' theorem graphically, i.e.\, by drawing the right angled triangle whose two shorter edges had lengths $571$ and $153$. This produces the approximate value $590$ for the hypotenuse immediately, after which a few calculations yield $591<\gam<592$. He then applied the following interpolation method to obtain more accurate upper and lower bounds.
He bisected the interval $[591,592]$ and chose the subinterval containing $\gam$ by calculating the square of the midpoint; after repeating this procedure three times, he chose the left hand end of the resulting subinterval because he needed a lower bound to $\gam$. If this is how he proceeded, he would have obtained the results in Table~\ref{tfour}, which yields
\begin{table}[!t]
\begin{eqnarray*}
\gam^2=571^2+153^2&=& 349450\, ,\\
590^2&=&348100 \, ,\\
592^2&=&350464\, ,\\
591^2&=&349281\, ,\\
(591\txtfrac{1}{2})^2&=& 349872\txtfrac{1}{4}\, ,\\
(591\txtfrac{1}{4})^2&=& 349576\txtfrac{9}{16}\, ,\\
(591\txtfrac{1}{8})^2&=& 349428\txtfrac{49}{64}\, .
\end{eqnarray*}
\caption{Calculations to determine $\gam$ \label{tfour}}
\end{table}
$(591\txtfrac{1}{8})^2<571^2+153^2$, or equivalently $591\txtfrac{1}{8}<\gam$.

The squares in the above interpolation could have been calculated directly, but starting from the fourth line Archimedes might have used the formula
\begin{equation}
\left( \frac{a+b}{2}\right)^2 = \frac{a^2+b^2}{2}-\left( \frac{a-b}{2}\right)^2,\label{book2.9}
\end{equation}
which would have reduced his labour considerably, because at each stage $a^2,\, b^2$ and $a-b$ are all known. The formula (\ref{book2.9}) is contained in the Elements, Book 2, Prop.~9 or 10.

We next discuss the first of the two exceptional cases, in which Archimedes recorded the inequality
\[
\alp=\sqrt{1823^2+240^2}<1838\txtfrac{9}{11}.
\]
A numerical computation yields $\alp\sim 1838.7302$, so according to the above method Archimedes should have recorded the better upper bound $1838\txtfrac{3}{4}$. It is plausible that he obtained this value by the interpolation method described above, and then adjusted it at the next step, where he used the simplification
\[
\frac{1823+1838\txtfrac{9}{11}}{240}=\frac{3661\txtfrac{9}{11}}{240}=\frac{1007}{66}.
\]
This sufficed for his purposes and would have reduced the labour involved in later calculations. Unfortunately the fraction
\[
\frac{3661\txtfrac{3}{4}}{240}
\]
cannot be simplified.

The second exceptional case must have a different explanation. Archimedes obtained
\begin{equation}
\bet=\sqrt{ 1007^2+66^2}<1009\txtfrac{1}{6}\label{upperbet}
\end{equation}
which led him after one further calculation to
\begin{equation}
\pi>\frac{96\times 66}{2017\txtfrac{1}{4}}>3\txtfrac{10}{71} \hspace{3ex} (\sim 3.140845).\label{lowerpi}
\end{equation}

It is suggested that he originally proved that $\bet< 1009\txtfrac{1}{4}$ by the interpolation method, and then continued to obtain
\[
\pi>\frac{96\times 66}{2017\txtfrac{1}{3}}>3\txtfrac{9}{64} \hspace{3ex} (= 3.140625).
\]
Not satisfied with this, he modified the interpolation method slightly to obtain (\ref{upperbet}) and then the slightly better lower bound (\ref{lowerpi}) on $\pi$.

\textbf{Acknowledgements} I should like to thank Y. N. Petridis, J. R. Silvester and D. R. Solomon for helpful comments.

Department of Mathematics\\
King's College London\\
Strand\\
London WC2R 2LS\\
UK

E.Brian.Davies@kcl.ac.uk

\end{document}